\def\pr{\mathbb{P}}
\def\E{\mathbb{E}}
\def\var{\mathbb{Var}}
\def\E{\mathbb{E}}
\def\R{\mathbb{R}}
\def\1{\mathbf{1}}
\def\lam {\lambda}
\def\bv{\mathbf{v}}
\def\bT{\mathbf{T}}
\def\bX{\mathbf{X}}
\def\var{\text{var}}
\newtheorem*{theorem*}{Theorem}
\newtheorem{theorem}{Theorem}
\newtheorem{lemma}[theorem]{Lemma}
\newtheorem*{defn*}{Definition}
\newtheorem*{prop*}{Proposition}
\newtheorem*{conj*}{Conjecture}
\newcommand{\new}[1]{{#1}}
\begin{document}
\title[On kissing numbers and spherical codes in high dimensions]{On kissing numbers and spherical codes \\in high dimensions}
\author{Matthew Jenssen}
\author{Felix Joos}
\author{Will Perkins}

\subjclass[2010]{Primary 52C17; Secondary 05B40, 82B21}
\keywords{Kissing numbers, spherical codes, high dimensional geometry}

\thanks{This research leading to these results was supported in part by EPSRC grant EP/P009913/1 (W.~Perkins) and DFG grant JO 1457/1-1 (F.~Joos).}
\address{University of Oxford}
\email{matthew.jenssen@maths.ox.ac.uk }
\address{University of Birmingham}
\email{f.joos@bham.ac.uk}
\address{University of Birmingham}
\email{math@willperkins.org}
\date{\today}

\begin{abstract}
We prove a lower bound of $\Omega (d^{3/2}\cdot(2/\sqrt{3})^d)$ on the kissing number in dimension $d$. This improves the classical lower bound of Chabauty, Shannon, and Wyner by a linear factor in the dimension.  We obtain a similar linear factor improvement to the best known lower bound on the maximal size of a spherical code of acute angle $\theta$ in high dimensions. 
\end{abstract}

\maketitle

\section{Introduction}

\subsection{Kissing numbers}

The kissing number in dimension $d$, $K(d)$, is the maximum number of non-overlapping unit spheres that can touch a single unit sphere in dimension $d$. See Figure~\ref{Fig2dkiss} for an optimal kissing configuration in dimension $2$.   The kissing number has been determined exactly in only a small number of dimensions: $1,2,3,4,8,$ and $24$~\cite{levenstein1979bounds,musin2008kissing,odlyzko1979new,schutte1952problem}.

\begin{figure}[t]
\centering
\begin{tikzpicture}[scale=0.8]

\draw[fill] (0,0) circle (0.05);

\draw[very thick] (0,0) circle (1);

\draw[very thick, fill=black!20] (0:2) circle (1);
\draw[very thick, fill=black!20] (60:2) circle (1);
\draw[very thick, fill=black!20] (120:2) circle (1);
\draw[very thick, fill=black!20] (180:2) circle (1);
\draw[very thick, fill=black!20] (240:2) circle (1);
\draw[very thick, fill=black!20] (300:2) circle (1);

\draw[fill] (0:2) circle (0.05);
\draw[fill] (60:2) circle (0.05);
\draw[fill] (120:2) circle (0.05);
\draw[fill] (180:2) circle (0.05);
\draw[fill] (240:2) circle (0.05);
\draw[fill] (300:2) circle (0.05);

\draw[thick]
(2,0)--(0,0)--(60:2);

\draw[thick] (0.85,0) arc (0:60:0.85);

\draw (0.5,0.3) node {$\frac{\pi}{3}$};

\end{tikzpicture}
\caption{An optimal kissing configuration in dimension $2$.}
\label{Fig2dkiss}
\end{figure}
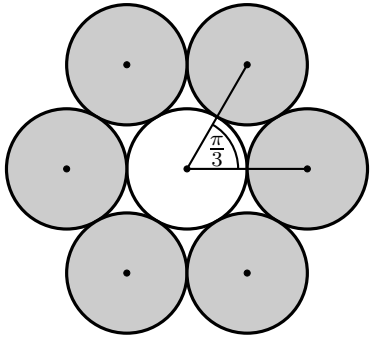

For very large dimensions,  much less is known.  There is a classical lower bound, due in various forms to~Chabauty~\cite{chabauty1953resultats}, Shannon~\cite{shannon1959probability}, and Wyner~\cite{wyner1965capabilities}, that shows
\begin{align}
\label{eqWynerKiss}
K(d) &\ge (1+o(1))  \sqrt{ \frac{3\pi d  }{8}  } \left( \frac{2}{\sqrt{3}}\right)^{d}   \, .
\end{align}

The argument is very elegant.  The centers of the kissing spheres, when projected radially onto the unit sphere, must be at angular distance at least $\pi/3$ from each other.  Now suppose we have an optimal kissing configuration in dimension $d$.  Then it must be the case that the spherical caps of angular radius $\pi/3$ cover the unit sphere, otherwise we could add another kissing sphere.  The bound in~\eqref{eqWynerKiss} is simply the reciprocal of the fraction of the surface of the unit sphere covered by a cap of angular radius $\pi/3$.  We will call this the covering lower bound. 

There is an equally simple upper bound of $2^d$ on an exponential scale, by a volume argument: the spherical caps of angular radius $\pi/6$ around the projections of the centers of the kissing spheres must be disjoint. This was improved by Rankin~\cite{rankin1955closest} to 
\begin{align*}
K(d) &\le (1+o(1)) \sqrt{\frac{\pi}{8}}  d^{3/2} \cdot 2^{d/2} \,.
\end{align*}

Rankin's bound was further improved by the breakthrough work of Kabatyanskii and Levenshtein~\cite{kabatiansky1978bounds} who applied the method of Delsarte~\cite{delsarte1972bounds} to obtain
\begin{align*}
K(d) &\le 2^{.4041... \cdot d}  \, .
\end{align*}
To the best of our knowledge there have been no further improvements to either the covering lower bound or the Kabatyanskii and Levenshtein upper bound, and the gap remains exponentially large in $d$.

 Our main result is a linear factor improvement to the lower bound.
 \begin{theorem}
\label{CorKissing}
\begin{align*}
K(d) &\ge (1+o(1))  \sqrt{ \frac{3\pi  }{8}  }  \log \frac{3}{2 \sqrt{2}} \cdot  d^{3/2} \left( \frac{2}{\sqrt{3}}\right)^{d} 
\end{align*}
as $d \to \infty$. 
\end{theorem}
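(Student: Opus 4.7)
The plan is to replace the deterministic covering argument by a probabilistic construction based on a hard-core Gibbs point process on $S^{d-1}$, in the spirit of the cluster-expansion techniques developed for sphere packing bounds. Let $\sigma$ be surface measure on $S^{d-1}$ and let $\mu_\lambda$ be the hard-core Gibbs measure at fugacity $\lambda$, supported on finite sets $X\subset S^{d-1}$ in which all pairwise angular distances exceed $\pi/3$, with density proportional to $\lambda^{|X|}$ against the Poisson reference. Every realization of $\mu_\lambda$ is automatically a valid $\pi/3$-code, so $K(d)\ge \mathbb{E}_\lambda|X|$, and it suffices to lower bound this expectation at a well-chosen $\lambda$.

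By rotational symmetry and the Campbell/Papangelou identity, $\mathbb{E}_\lambda|X|=\lambda\,\sigma(S^{d-1})\,P_\lambda$, where $P_\lambda$ is the hard-core probability that no point of $X$ lies in a fixed $\pi/3$-cap. Bounding $P_\lambda$ by its Poisson analogue $e^{-\lambda V_1}$, with $V_1$ the cap volume, and optimizing at $\lambda=1/V_1$ recovers only the covering bound, up to a factor $1/e$. The linear-in-$d$ improvement comes from the fact that in high dimensions the spherical hard-core graph is almost triangle-free: for typical $y,y'\in B(x,\pi/3)$, the inner product $\langle y,y'\rangle$ concentrates near $1/4<1/2$, so pairs of neighbors of $x$ are themselves non-adjacent. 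Plugging this sparsity into an Ajtai--Koml\'os--Szemer\'edi-type refinement of the first moment estimate (equivalently, convergence of the hard-core cluster expansion at $\lambda\asymp d/V_1$ with only the two-body term relevant) produces the logarithmic-in-degree gain standard for locally sparse graphs.

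The specific constant $\log(3/(2\sqrt 2))$ arises from a Laplace-method computation of the two-body interaction volume. Fix $x,z\in S^{d-1}$ at angular distance exactly $\pi/3$; in coordinates where $x=(1,0,\ldots)$ and $z=(1/2,\sqrt3/2,0,\ldots)$, a point $y$ lies in $B(x,\pi/3)\cap B(z,\pi/3)$ iff its first two coordinates $(a,b)$ satisfy $a\ge 1/2$ and $a+\sqrt{3}\,b\ge 1$, while the orthogonal mass contributes $(1-a^2-b^2)^{(d-3)/2}$. Laplace's method localizes at the boundary corner $a=1/2$, $b=1/(2\sqrt3)$, where $1-a^2-b^2=2/3$, so the intersection volume decays like $(2/3)^{d/2}$ while a single cap has volume of order $(3/4)^{d/2}$. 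Their ratio is $(9/8)^{d/2}=(3/(2\sqrt2))^d$, which is precisely the excess of effective log-degree over the covering bound.

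The main obstacle will be turning the almost-triangle-free heuristic into a quantitative bound on $\mathbb{E}_\lambda|X|$ uniformly in $d$: either via explicit analysis of a random greedy or R\"odl-nibble-type process on the sphere, or via absolute-convergence control of the hard-core cluster expansion at $\lambda\asymp d/V_1$ with explicit bounds on the three-body and higher contributions, so that the two-body term alone determines the leading correction. Once this is done and the polynomial prefactor $V_1^{-1}\asymp\sqrt d\,(2/\sqrt3)^d$ is combined with the $d\log(3/(2\sqrt2))$ logarithmic-in-degree gain, the claimed $d^{3/2}(2/\sqrt3)^d$ bound on $K(d)$ follows.
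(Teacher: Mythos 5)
Your setup coincides with the paper's: the grand canonical hard cap (hard-core) model on $S_{d-1}$ at fugacity $\lam$, the observation $K(d)\ge \E_\lam|\mathbf X|$, the identity $\E_\lam|\mathbf X|=\lam\cdot\mathrm{F}^\theta(\lam)$ (your Campbell/Papangelou step is Lemma~\ref{lemHardCap}\ref{cap2}--\ref{cap3}), the fugacity scale $\lam\asymp d/s_d(\theta)$ up to polynomial factors, and the correct identification of where $\log\frac{3}{2\sqrt2}$ comes from: the intersection of two caps of angular radius $\pi/3$ at angular distance $\pi/3$ sits inside a cap of angular radius $q(\pi/3)=\arcsin\sqrt{2/3}$, giving the ratio $(\sin\theta/\sin q(\theta))^d=(3/(2\sqrt2))^d$, exactly the paper's $c_\theta$. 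However, the core of the argument is missing, and you flag this yourself: you do not have a quantitative mechanism that converts ``typical pairs of points in a $\pi/3$-cap are non-adjacent'' into a lower bound on $\E_\lam|\mathbf X|$. Neither of your two proposed substitutes closes this gap. Convergence of the cluster expansion at $\lam\asymp d/V_1$ is not available: standard convergence criteria require $\lam V_1=O(1)$, and at $\lam V_1\asymp d$ there is no known absolute-convergence control, nor any reason to expect that only the two-body term matters there. An Ajtai--Koml\'os--Szemer\'edi or nibble route needs genuine triangle-freeness (or a discretization plus a Jiang--Vardy-type graph argument, which the paper explicitly lists as an open alternative); the spherical adjacency graph is not triangle-free, and ``concentration of $\langle y,y'\rangle$ near $1/4$'' for \emph{uniform} $y,y'$ in the cap is too weak, because the relevant set of available points inside the cap is the random free set left by the rest of the code and can be an arbitrary (e.g.\ highly clustered) subset.

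What the paper does instead, and what you would need, is: (a) a geometric lemma valid for \emph{every} measurable $A\subseteq C_\theta(x)$, namely $\E[s(C_\theta(\mathbf u)\cap A)]\le 2\,s_d(q(\theta))$ for $\mathbf u$ uniform in $A$ (Lemma~\ref{LemCapGeom}, proved via the containment $C_\theta(u)\cap C_\tau(x)\subseteq$ a cap of radius $\sigma(\theta,\tau)\le q(\theta)$ together with a factor-$2$ symmetrization by ordering $u,v$ according to their inner product with $x$); and (b) the two-bound tension argument: with $\mathbf T$ the free subset of a uniformly placed cap $C_\theta(\mathbf v)$ and $z=\E[\log Z^\theta_{\mathbf T}(\lam)]$, one has both $\alpha\ge\lam e^{-z}$ (Jensen plus the spatial Markov property) and $\alpha\ge \frac{1}{s_d(\theta)}e^{-2\lam s_d(q(\theta))}z$ (via Lemma~\ref{lemHardCap}\ref{cap4}--\ref{cap6} and the geometric lemma), and optimizing the worst case in $z$ via the Lambert-$W$ asymptotics at $\lam=\frac{1}{d\, s_d(q(\theta))}$ yields $\alpha\ge(1+o(1))c_\theta d/s_d(\theta)$. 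Without (a) and (b) (or a worked-out replacement), your proposal is a correct heuristic with the right constant but not a proof.
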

The constant $\sqrt{ \frac{3\pi  }{8}  }  \log \frac{3}{2 \sqrt{2}}$ is approximately $.0639$.

It is instructive to compare the state of affairs of the kissing numbers in high dimensions to that of the maximum sphere packing density in $\R^d$.  As with the kissing numbers there is a very simple covering argument that gives a lower bound of $2^{-d}$ (this is attributed to Minkowski~\cite{Min05} who proved a slightly better lower bound of $\zeta(d) \cdot 2^{1-d}$).  There is an upper bound of $2^{-.599..d}$ due to Kabatyanskii and Levenshtein~\cite{kabatiansky1978bounds} based on their bound for spherical codes. No improvements on an exponential scale to either bound are known.

There are, however, several important works providing improvements on a smaller scale. Most significantly, Rogers~\cite{rogers1947existence} improved the asymptotic order of the lower bound by showing a lower bound of $\Omega(d \cdot 2^{-d})$.
 Rogers obtained this improvement by analyzing a random lattice packing of $\R^d$ by means of the Siegel mean-value theorem.  Subsequent work of Davenport-Rogers~\cite{davenport1947hlawka}, Ball~\cite{ball1992lower}, Vance~\cite{vance2011improved}, and Venkatesh~\cite{venkatesh2012note} has improved the leading constant of Rogers' result.  Venkatesh also obtained an additional factor of $\log \log d$ in a sparse sequence of dimensions. Finally, Cohn and Zhao~\cite{cohn2014sphere} recently obtained a constant factor improvement to the Kabatyanskii and Levenshtein upper bound, by improving the geometric argument that links spherical codes to sphere packings. 
 
Theorem~\ref{CorKissing} is an analogue of Rogers' result: both give a linear factor improvement to the simple covering lower bound.  As far as we understand, none of the lattice-based methods used for the sphere packing lower bounds mentioned above can be adapted to kissing numbers.  In fact, only very recently has it been shown by Vl{\u a}du{\c t} that the lattice kissing number is exponential in $d$~\cite{2018lattice} (albeit with a smaller base of the exponent than $2/\sqrt{3}$).  Achieving the bound in Theorem~\ref{CorKissing} or~\eqref{eqWynerKiss} for the lattice kissing number remains a challenging open problem.

\subsection{Spherical codes}  
\label{secspherecodes} 
 
 As alluded to above, kissing configurations are examples of \textit{spherical codes}.  Let $S_{d-1}$ denote the unit sphere in dimension $d$. A spherical code of angle $\theta$ in dimension $d$ is a set of unit vectors $x_1, \dots , x_k \in S_{d-1}$  so that $\langle x_i, x_j \rangle \le \cos \theta$   for all $ i \ne j $; that is, the angle between each pair of distinct vectors is at least $\theta$.  The size of such a spherical code is $k$, the number of these unit vectors, or \textit{codewords}.   We denote by $A(d,\theta)$ the size of the largest spherical code of angle $\theta$ in dimension $d$.  The kissing number can therefore be written as $K(d) = A(d, \pi/3)$. For $\theta \ge \pi/2$, Rankin~\cite{rankin1955closest} determined $A(d,\theta)$ exactly.  In what follows we will therefore consider $\theta \in (0, \pi/2)$ fixed and use standard asymptotic notation, $O(\cdot), \Omega(\cdot), \Theta(\cdot), o(\cdot)$, all as $d \to \infty$.

  For a measurable set $A \subseteq S_{d-1}$, let $s(A)$ denote the normalized surface area of $A$; that is $s(A) = \frac{\hat s(A)}{\hat s(S_{d-1})}$ where $\hat s(\cdot)$ is the usual surface area.  For $x \in S_{d-1}$, let $C_\theta(x)$ be the spherical cap of angular radius $\theta $ around $x$; that is, $C_\theta(x) = \{ y \in S_{d-1}: \langle x, y \rangle \ge \cos \theta \}$. Let $s_d(\theta) = s(C_{\theta}(x))$ be the normalized surface area of a spherical cap of angular radius $\theta$; in other words,
\begin{align*}
s_d(\theta)&= \frac{1}{\sqrt{\pi}}  \frac{ \Gamma(d/2)}{ \Gamma((d-1)/2)}\int_{0}^\theta \sin^{d-2} x  \, dx \,.
\end{align*}

The covering argument of Chabauty~\cite{chabauty1953resultats}, Shannon~\cite{shannon1959probability}, and Wyner~\cite{wyner1965capabilities} gives a general lower bound for the size of spherical codes:
\begin{align}
\label{LBShanCode}
A(d,\theta) &\ge  \frac{1}{s_d(\theta)}   = (1+o(1))  \sqrt{2 \pi d} \cdot \frac{\cos \theta}{\sin^{d-1} \theta} \,.
\end{align}
The kissing number bound~\eqref{eqWynerKiss} is an application of \new{the previous inequality} with $\theta=\pi/3$. 

  The best upper bound on $A(d,\theta)$ is due to Kabatyanskii and Levenshtein~\cite{kabatiansky1978bounds}:
 \begin{align*}
A(d,\theta) &\le e^{ \phi(\theta) d (1+o(1))},
\end{align*}
for a certain $\phi(\theta) > -\log \sin \theta$. Again to the best of our knowledge there have been no further improvements to these bounds, and so for every $\theta \in (0, \pi/2)$ the gap between the upper and lower bounds is exponential in $d$.

As with the kissing number, we improve the lower bound on the maximal size of spherical codes by a linear factor in the dimension.  To state our result, we define $q(\theta)$ to be the angular radius of the smallest spherical cap that contains the intersection of two spherical caps of angular radius $\theta$ whose centers are at angle $\theta$ (see Figure~\ref{Qfig}); that is,
\begin{align}
\label{eqQdef}
q(\theta) &=  \arcsin \left(\frac{ \sqrt{(\cos \theta -1)^2 (1+2 \cos \theta)}    }{ \sin \theta   }   \right) \,.
\end{align}
For instance, $q(\pi/3) = \arcsin (\sqrt{2/3})$.  
Crucially for our proof, $q(\theta)<\theta$ for all $\theta \in (0,\pi/2)$.  
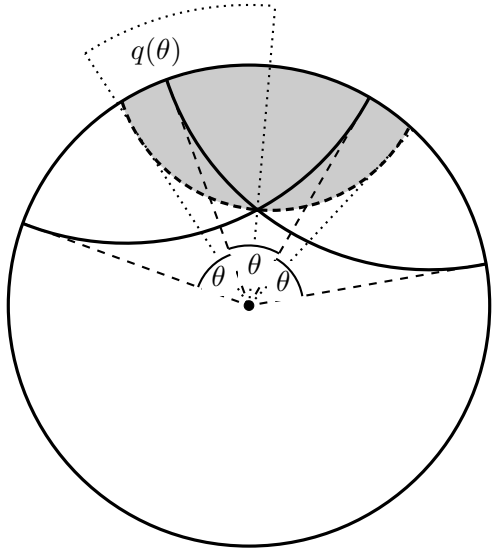
\begin{figure}[h]
\centering
\begin{tikzpicture}[scale=0.8]

\draw[fill,black!20] 
(85:4.2) +(194:2.6) arc (194:336:2.6)--(85:4.2);

\fill[even odd rule,white]
(-5.5,0)--(5.5,0) arc (0:180:5.5)
(-4,0)--(4,0) arc (0:180:4)
;

\draw[fill] (0,0) circle (0.08);
\draw[very thick] (0,0) circle (4);

\draw[thick, dashed]
(10:4)--(0,0)--(110:4)
(160:4)--(0,0)--(60:4);

\draw[thick, dotted]
(47:4)--(0,0)--(123:5)
(0,0)--(85:5)
(123:5) arc (123:85:5)
;

\node[fill=white]  at (35:0.7) {$\theta$};
\node[fill=white]  at (82:0.7) {$\theta$};
\node[fill=white]  at (136:0.7) {$\theta$};
\node  at (110:4.5) {$q(\theta)$};

\draw[very thick]
(110:6) +(248:4.6) arc (248:332:4.6)
(60:6) +(198:4.6) arc (198:282:4.6)
;
\draw[very thick, densely dashed]
(85:4.2) +(198:2.6) arc (198:332:2.6)
;

\draw[thick]
(160:0.9) arc (160:110:0.9)
(110:1) arc (110:60:1)
(60:0.9) arc (60:10:0.9)
;

\end{tikzpicture}
\caption{The angle $q(\theta)$.}
\label{Qfig}
\end{figure}

\begin{theorem}
\label{CorSpherical}
Let $\theta \in (0, \pi/2)$ be fixed. Then
\begin{align*}
A(d,\theta) &\ge  (1+o(1)) \frac{c_\theta \cdot d}{s_d(\theta)} \,
\end{align*}
as $d \to \infty$, where
\begin{align}
\label{eqCthetadef}
c_\theta &=\log \left(\frac{\sin \theta}{\sin q(\theta)  }  \right) =\log \frac{\sin ^2\theta }{\sqrt{(1- \cos \theta )^2 (1+2 \cos \theta )}}  \,.
\end{align}
\end{theorem}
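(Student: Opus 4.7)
My plan is to prove Theorem \ref{CorSpherical} via a probabilistic construction followed by a random thinning, in the spirit of Rogers' improvement to the Minkowski sphere packing bound. Fix a constant $c \in (0, c_\theta)$ and set $\lambda_d = c \cdot d / s_d(\theta)$. Place a Poisson point process $X$ on $S_{d-1}$ of intensity $\lambda_d$ (with respect to normalized surface measure), so that $\mathbb{E}[|X|] = \lambda_d$ already far exceeds the covering bound $1/s_d(\theta)$. I would then extract a valid spherical code $Y \subseteq X$ via some random thinning and aim to show $\mathbb{E}[|Y|] \ge (1+o(1))\, c\, d / s_d(\theta)$; the theorem then follows by letting $c \uparrow c_\theta$.

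The natural candidate is a Mat\'ern III thinning: assign i.i.d.\ uniform birth times $\tau_x \in [0,1]$ to the points of $X$, and accept $x \in X$ iff no already-accepted $y$ with $\tau_y < \tau_x$ satisfies $\langle x, y\rangle > \cos\theta$. A heuristic self-consistent equation $\rho'(t) = \lambda_d\, e^{-\rho(t) s_d(\theta)}$ for the density $\rho(t)$ of accepted points born by time $t$ integrates to $\rho(1) \approx s_d(\theta)^{-1} \log(1 + \lambda_d s_d(\theta)) \approx (\log d)/s_d(\theta)$---an improvement on the covering bound, but only by a factor of $\log d$, not $d$. The shortfall signals that one is overcounting the blocking effect: by the very definition of $q(\theta)$, any two accepted points $y_1, y_2 \in Y$ that both lie within angle $\theta$ of a common candidate $x$ and are themselves at pairwise angle $\ge \theta$ must lie in a cap of angular radius $\le q(\theta)$ around the midpoint of the arc from $y_1$ to $y_2$, and $q(\theta) < \theta$.

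To recover the full factor $d$, one should refine the random construction to exploit this localization. Two natural formulations suggest themselves: a two-scale or layered thinning, in which $S_{d-1}$ is partitioned into cells of scale $q(\theta)$ with at most one candidate accepted per cell and $\theta$-separation enforced between representatives of different cells; or a direct analysis of the hard-core Gibbs measure on $S_{d-1}$ of fugacity $\lambda_d$, bounding its expected size via a cluster/polymer expansion whose higher-order activity bound uses $s_d(q(\theta))$ in place of $s_d(\theta)$. The constant $c_\theta = \log(\sin\theta/\sin q(\theta))$ is precisely the exponent governing the ratio $s_d(\theta)/s_d(q(\theta)) \approx e^{(d-1)c_\theta}$, so a sharpened self-consistent equation in which the effective exclusion volume per accepted point decays by this factor yields the correct constant. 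The main obstacle is to make the effective-volume reduction rigorous: the conditional distribution of accepted points inside the $\theta$-cap of a fixed candidate $x$ is strongly dependent on the past of the thinning, and controlling the resulting higher-order correlations---either via a careful coupling across birth times or via convergence of the cluster expansion at the large fugacity $\lambda_d = \Theta(d/s_d(\theta))$---is the crux of the argument. The only geometric input required is the $q(\theta)$-localization lemma inherent in the definition \eqref{eqQdef}.
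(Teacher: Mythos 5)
Your proposal is a plan rather than a proof, and the missing piece is precisely the theorem's content: you yourself identify ``the main obstacle'' (making the effective-volume reduction from $s_d(\theta)$ to $s_d(q(\theta))$ rigorous) and leave it unresolved. Moreover, the two concrete routes you sketch would not work as stated. A cluster/polymer expansion for the hard cap model cannot converge at fugacity $\lambda_d=\Theta(d/s_d(\theta))$: convergence criteria of Koteck\'y--Preiss type require $\lambda$ times the excluded area to be $O(1)$, whereas here $\lambda_d\, s_d(\theta)=\Theta(d)$ (and the fugacity one actually needs, $\lambda=\frac{1}{d\, s_d(q(\theta))}$, is larger still by a factor $s_d(\theta)/s_d(q(\theta))=e^{(c_\theta+o(1))d}$), so the expansion regime is hopelessly far away. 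Separately, your geometric ``localization'' is misstated: the definition \eqref{eqQdef} says that the \emph{intersection} $C_\theta(y_1)\cap C_\theta(y_2)$ of two blocking caps with centers at angle $\theta$ --- i.e.\ the set of candidates excluded by both $y_1$ and $y_2$ --- lies in a cap of radius $q(\theta)$; it does not say that $y_1,y_2$ themselves lie in a $q(\theta)$-cap, which is false whenever their mutual angle exceeds $2q(\theta)$ (it can be as large as $2\theta$). So neither the layered thinning nor the expansion, as described, closes the gap, and the Mat\'ern III analysis you do carry out concedes a bound of only $\Theta(\log d/s_d(\theta))$.

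For comparison, the paper also works with the hard-core (hard cap) Gibbs measure, but it needs neither correlation control nor any expansion. One samples $\mathbf X$ from the model, picks an independent uniform $\mathbf v$, and sets $\mathbf T\subseteq C_\theta(\mathbf v)$ to be the part of the cap not blocked by points of $\mathbf X$ outside the cap. The spatial Markov property gives two bounds on the expected code size $\alpha$ that pull in opposite directions in $z=\E[\log Z^\theta_{\mathbf T}(\lambda)]$: Jensen's inequality yields $\alpha\ge \lambda e^{-z}$, while $\alpha=s_d(\theta)^{-1}\E[\alpha^\theta_{\mathbf T}(\lambda)]\ge s_d(\theta)^{-1}e^{-2\lambda s_d(q(\theta))}\,z$, where the second bound rests on the one geometric lemma (Lemma~\ref{LemCapGeom}): for any measurable $A\subseteq C_\theta(x)$ and $\mathbf u$ uniform in $A$, $\E[s(C_\theta(\mathbf u)\cap A)]\le 2 s_d(q(\theta))$ --- this, rather than your pairwise claim, is the correct way $q(\theta)$ enters. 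Balancing the two bounds and taking $\lambda=\frac{1}{d\, s_d(q(\theta))}$ gives Theorem~\ref{thmCapAvg} and hence the statement, with the Lambert-$W$ asymptotics producing the constant $c_\theta=\log(\sin\theta/\sin q(\theta))$. If you want to salvage your write-up, you would need to supply an argument of comparable strength for your refined thinning; at present the factor-$d$ gain is asserted, not proved.
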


Theorem~\ref{CorKissing} is obtained from Theorem~\ref{CorSpherical} by setting $\theta = \pi/3$.

\subsection{On the measure of spherical codes}

Now that we know such codes exist, we can ask how plentiful spherical codes of angle $\theta$ in dimension $d$ of size $\Theta \left(d/ s_d(\theta)   \right)$ are. We measure this by asking for the probability that a given number of random points on the unit sphere form such a code. In particular, let $E_{d,\theta,n}$ be the event that $n$ uniformly and independently chosen points from $S_{d-1}$ form a spherical code of angle $\theta$. 
\begin{theorem}
\label{thmCodeProbLB}
There exists $n = n(d) \ge (1+o(1)) \frac{ c_\theta \cdot d } {s_d(\theta)  }$ so that 
\begin{align*}
\frac{1}{n} \log ( \pr [E_{d,\theta, n}]/n!) &\ge  \log   s_d(q(\theta)) +o(1) \, .
\end{align*}
\end{theorem}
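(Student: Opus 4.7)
My plan realises $\pr[E_{d,\theta,n}]/n!$ as the $n$-th coefficient of the grand partition function of the hard-sphere model on $S_{d-1}$ and transfers the lower bound through a Gibbs reweighting. Let $\mu_\lambda$ denote the hard-sphere Gibbs measure on $S_{d-1}$ at fugacity $\lambda$ with hard-core angular separation $\theta$, so that its grand partition function is
\[
Z(\lambda) := \sum_{n\ge 0}\frac{\lambda^n}{n!}\pr[E_{d,\theta,n}].
\]
Since $\mu_\lambda(|X|=n) = \lambda^n\pr[E_{d,\theta,n}]/(n!\,Z(\lambda))$, rearrangement gives the exact identity
\[
\frac{1}{n}\log\frac{\pr[E_{d,\theta,n}]}{n!} = -\log\lambda + \frac{\log Z(\lambda) + \log \mu_\lambda(|X|=n)}{n}.
\]

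I then choose $\lambda$ so that $\E_\lambda[|X|] \ge (1+o(1)) c_\theta d/s_d(\theta)$ and simultaneously $\lambda \le (1+o(1))/s_d(q(\theta))$; for such $\lambda$ I take $n^\ast$ to be any integer at least $c_\theta d/s_d(\theta)$ maximising $\mu_\lambda(|X|=n^\ast)$ on that range. The error terms in the identity are then cheap: $\log Z(\lambda)\ge 0$ from the empty summand, and a reverse-Markov argument (using that $|X|$ is deterministically bounded above by the Kabatyanskii--Levenshtein bound $2^{O(d)}$) shows $\mu_\lambda(|X|=n^\ast) \ge 2^{-O(d)}$, so $\log\mu_\lambda(|X|=n^\ast)/n^\ast = O(d)/n^\ast = O(s_d(\theta)) = o(1)$. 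The identity therefore reduces the entire theorem to the single bound $-\log\lambda\ge \log s_d(q(\theta))+o(1)$.

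The existence of such a $\lambda$ is naturally approached via the Nguyen--Zessin (Papangelou) identity for the hard-sphere model,
\[
\lambda\,\E_\lambda[f(X)] = \E_\lambda[|X|],\qquad f(X) := 1 - \vol\!\Big(\textstyle\bigcup_{x\in X} C_\theta(x)\Big),
\]
which converts the $\lambda$-bound into the geometric lower bound $\E_\lambda[f(X)] \ge s_d(q(\theta))\cdot \E_\lambda[|X|]\cdot(1+o(1))$ at the chosen fugacity. The defining property of $q(\theta)$---that $|C_\theta(x)\cap C_\theta(x')|\le s_d(q(\theta))$ whenever $\langle x,x'\rangle\le\cos\theta$---precisely controls the pair-intersection correction in a third-order-truncated Bonferroni expansion of $\vol(\bigcup_{x\in X} C_\theta(x))$, while the expected pair-overlap sum is computable via a Mecke identity for the hard-sphere model.

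The principal obstacle will be controlling the triple and higher-order cap intersections appearing in the Bonferroni expansion, since these are not directly bounded by $s_d(q(\theta))$. I expect these to be handled by working with the full Mayer/cluster expansion of $\log Z(\lambda)$ and verifying its convergence at the target fugacity $\lambda\lesssim 1/s_d(q(\theta))$ with the low-order terms dominant. The linear-in-$d$ factor $c_\theta d$ appearing in the expected code size should then emerge from the exponentially large ratio $s_d(\theta)/s_d(q(\theta)) = e^{(1+o(1))c_\theta d}$, which sets the scale of the effective one-body activity induced by the hard-core repulsion in high dimensions.
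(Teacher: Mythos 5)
Your first half is exactly the paper's argument: the identity $\pr[E_{d,\theta,n}]/n! = \pr_\lambda[|\mathbf X|=n]\cdot Z_d^\theta(\lambda)\cdot\lambda^{-n}$, the bound $\log Z_d^\theta(\lambda)\ge 0$, a reverse-Markov/pigeonhole step using a $2^{O(d)}$ bound on code size to get $\pr_\lambda[|\mathbf X|=n^\ast]\ge 2^{-O(d)}$, and the observation that $O(d)/n^\ast=o(1)$ because $n^\ast$ is exponentially large. This correctly reduces the theorem to producing a fugacity $\lambda\le(1+o(1))/s_d(q(\theta))$ at which the expected code size is at least $(1+o(1))c_\theta d/s_d(\theta)$.

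The gap is in how you propose to produce that fugacity. The existence of such a $\lambda$ is precisely the content of Theorem~\ref{thmCapAvg} (take $\lambda=1/s_d(q(\theta))$, which satisfies its hypothesis); citing it finishes the proof, and this is what the paper does. Instead you propose to re-derive it from the Papangelou identity (Lemma~\ref{lemHardCap}\ref{cap2}) plus a truncated Bonferroni expansion of $s\bigl(\bigcup_{x\in\mathbf X}C_\theta(x)\bigr)$ controlled by convergence of the Mayer/cluster expansion at $\lambda\approx 1/s_d(q(\theta))$. That route fails: every known convergence regime for the hard cap/hard sphere cluster expansion requires $\lambda$ times the excluded cap area to be $O(1)$, i.e.\ $\lambda\lesssim 1/s_d(\theta)$, whereas at your target fugacity $\lambda\, s_d(\theta)=s_d(\theta)/s_d(q(\theta))=e^{(1+o(1))c_\theta d}$ is exponentially large; working inside the convergent regime only yields expected size $O(1/s_d(\theta))$, which is the covering bound and loses the factor $c_\theta d$ that the theorem needs. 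Nor are "low-order terms dominant" at the target density: the first-order term of your expansion is $\approx n\, s_d(\theta)\approx c_\theta d\gg 1$ and the pairwise term is $\approx \tbinom{n}{2}s_d(q(\theta))=e^{\Theta(d)}$ (since $\sin q(\theta)>\sin^2\theta$ for all $\theta\in(0,\pi/2)$), so the truncation is uncontrolled. The whole difficulty of the problem lives at fugacities far beyond the expansion regime, and the paper overcomes it not by an expansion but by the two-part experiment and the tension between Lemma~\ref{lemHardCap}\ref{cap3} and \ref{cap4} together with the geometric Lemma~\ref{LemCapGeom}; your proposal as written replaces that mechanism with one that cannot reach the required fugacity. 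The fix is simply to invoke Theorem~\ref{thmCapAvg} at this point, after which your argument coincides with the paper's proof.
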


The quantity $\pr [E_{d,\theta, n}]/n!$ is simply the volume in configuration space of unordered $n$-tuples of points on the unit sphere consisting of spherical codes of angle $\theta$, and it arises naturally in the statistical physics model we use in the proof below.  As we indicate in Section~\ref{secCodesVolLB}, this lower bound is significantly larger than the trivial lower bound that comes from applying Theorem~\ref{CorSpherical} with a slightly larger angle $\theta'$ and allowing the codewords to move in a small region around their starting points (a bound of $- \Theta(d)$ compared to a bound of $-\Theta(d \log d)$).  We would be very interested to know of an alternative method to achieve a comparable bound, as it would likely have significant applications in statistical physics.

\subsection{Method of proof}
The results above follow from Theorem~\ref{thmCapAvg} below which gives a lower bound on the expected size of a random spherical code drawn from a Gibbs point process.  We choose points on the unit sphere according to a Poisson process conditioned on the event that the points form a spherical code of the desired angle; or in other words, the points are the centers of a set of non-overlapping spherical caps of angular radius $\theta/2$.  This `hard cap' model is inspired by the hard sphere and hard core lattice gas models from statistical physics, both simple models of gasses in which the only interaction between particles is a hard core exclusion.   The model comes with a parameter, $\lam$, the fugacity, which governs the expected number of caps or spheres in a configuration drawn from the model.

The proof is self-contained and uses basic tools from probability theory and one geometric lemma.  The idea of the proof comes from recent work on sphere packings and independent sets in sparse graphs~\cite{davies2018average,Fjoos17}, and in fact after accounting for differences in the relevant statistical physics models, the structure of the proof is remarkably similar in the three settings.  At the heart of the proof are two bounds on the expected size of the random spherical code, both functions of a random subset $\mathbf T$ of the unit sphere which is itself a local function of the random code.   These bounds are in tension with each other: the larger $\mathbf T$ is, the larger the first bound but the smaller the second.  By exploiting this tension we obtain the result.

\subsection{Related work}
 For surveys on spherical codes and kissing numbers see~\cite{boroczky2004finite,BDM12,cohn2016packing,conway2013sphere,pfender2004kissing,zong1998kissing}.
 
 Coxeter, Few, and Rogers~\cite{coxeter1959covering} proved that any covering of $\R^d$ with equal-sized spheres must have density $\Omega(d)$.  Using a covering argument this immediately implies a lower bound of $\Omega(d \cdot 2^{-d})$ on the sphere packing density, albeit with a worse constant than Rogers' original result.  One could hope to gain a factor $d$ over the simple lower bound for spherical codes in the same way; but proving an analogous covering lower bound for $S_{d-1}$ remains an intriguing open problem~\cite[p.~199]{boroczky2004finite}. 

In coding theory, the covering lower bound on the size of an error-correcting code is known as the Gilbert-Varshamov bound~\cite{gilbert1952comparison,varshamov1957estimate}. For binary and $q$-ary codes under Hamming distance, Jiang and Vardy~\cite{jiang2004asymptotic} improved the asymptotic order of the Gilbert-Varshamov bound using tools from extremal graph theory.  They further proposed improving the asymptotic order of the covering bound for spherical codes as an open problem; Theorem~\ref{CorSpherical} above solves this problem.    The method of Jiang and Vardy was later used by Krivelevich, Litsyn, and Vardy~\cite{krivelevich2004lower} for the sphere packing problem and could potentially be adapted with an appropriate discretization to spherical codes.

As a direction for future work, we suggest applying our statistical physics based method to other coding problems. One  could hope to obtain not only lower bounds on the maximum size of a code with a given minimal distance but also a lower bound on the number of codes near this bound.

\section{The hard cap model}

As mentioned above, a spherical code $x_1, \dots, x_k$ of angle $\theta$ corresponds to a set of non-overlapping spherical caps $C_{\theta/2}(x_1), \dots , C_{\theta/2}(x_k)$, and determining $A(d,\theta)$ is equivalent to determining the greatest number of non-overlapping spherical caps of angular radius $\theta/2$ that can be packed on $S_{d-1}$.

Let $\mathcal P_k (d,\theta) = \{ \{x_1, \dots x_k \} \in S_{d-1}^k : \langle x_i , x_j  \rangle \le  \cos \theta \, \, \forall \, i \ne j \}$ denote the set of all spherical codes of size $k$ and angle $\theta$ in dimension $d$. For $k, d, \theta$ so that $\mathcal P_k(d,\theta) \ne \emptyset$, the \textit{canonical hard cap model} with $k$ caps is simply a uniformly random spherical code from $\mathcal P_k(d,\theta)$.  The partition function of the canonical hard cap model is
\begin{align*}
\hat Z_d^\theta( k) &= \frac{1}{k!}  \int_{S_{d-1}^k}  \mathbf 1_{\mathcal D_\theta(x_1, \dots, x_k)} \, ds(x_1) \cdots ds(x_k)
\end{align*}
where $\mathcal D_\theta(x_1, \dots, x_k)$ is the event that $\langle x_i , x_j \rangle \le  \cos \theta$ for all $1 \le i < j\le k$, and the integrals over $S_{d-1}$ are with respect to the normalized surface area $s(\cdot)$.   
The probability that $k$ uniform and independent points on the unit sphere form a spherical code is simply $k! \cdot \hat Z_d^\theta(k)$.  We take $\hat Z_d^\theta(0) =1$. 

The \textit{grand canonical hard cap model} at fugacity $\lam$ is a Poisson point process $\mathbf X$ of intensity $\lam$ on $S_{d-1}$ (with normalized surface area as the underlying measure) conditioned on the event that $\langle x , y \rangle \le \cos \theta$ for all distinct $x, y \in \mathbf X$.  The partition function of the grand canonical hard cap model is 
\begin{align*}
Z_d^\theta(\lam) &= \sum_{k\ge 0} \lam^k \hat Z_d^\theta(k)  \, . 
\end{align*}
We can alternatively describe the grand canonical model as follows: choose a non-negative integer $k$ at random with probability proportional to $ \lam^k \hat Z_d^\theta(k) $, then choose a spherical code $\mathbf X$ from the canonical hard cap model with $ k$ caps.  We will write $\E_\lam [ \cdot], \pr_\lam[\cdot], \var_\lam(\cdot)$, to indicate the dependence of the model on the fugacity where needed. 

Let $\alpha_d^\theta(\lam)$ denote the expected size of a random spherical code of angle $\theta $ in dimension $d$ drawn from the grand canonical hard cap model.  That is,
\begin{align*}
\alpha_d^\theta(\lam) &=  \sum_{k\ge 0} k\cdot \pr_\lam[|\mathbf X|=k]= \sum_{k\ge 0} k \cdot \frac{\lam^k \hat Z_d^\theta(k)}{Z_d^\theta(\lam) }=\frac{\lam  \left( Z_d^\theta(\lam) \right) ^\prime}{Z_d^\theta(\lam)} = \lam \cdot \left( \log Z_d^\theta(\lam) \right) ^\prime.
\end{align*}

We note that $\alpha_d^\theta$ is strictly increasing in $\lam$ (Lemma~\ref{lemHardCap} below), but it is not true in general that $\lim_{\lam \to \infty} \alpha_d^\theta(\lam) = A(d,\theta)$ (e.g.~consider $d=2,\theta = \pi/3$).  However we can obtain $A(d,\theta)$ as a double limit: $A(d,\theta) = \lim_{\theta' \nearrow \theta} \lim_{\lam \to \infty} \alpha_{d}^{\theta'}(\lam)$.

Our main result of this section is a lower bound on $\alpha_d^\theta(\lam)$. 

\begin{theorem}
\label{thmCapAvg}
For $\lam \ge \frac{1}{d \cdot s_d(q(\theta))}$, 
\begin{align*}
\alpha_d^\theta(\lam) &\ge  (1+o(1))  \frac{c_\theta \cdot d }{s_d(\theta) } \,,
\end{align*}
where $q(\theta)$ and $c_{\theta}$ are given in~\eqref{eqQdef} and~\eqref{eqCthetadef} respectively. 
\end{theorem}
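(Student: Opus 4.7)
The plan is to use the Mecke--Georgii--Nguyen--Zessin identity for the Gibbs point process $\bX$ at fugacity $\lam$. Applied to the constant function $1$, this identity yields
\[
\alpha_d^\theta(\lam)\;=\;\lam\,\E_\lam\!\left[s\!\left(S_{d-1}\setminus\bigcup_{x\in\bX}C_\theta(x)\right)\right],
\]
so the expected code size equals $\lam$ times the expected free volume. The classical covering bound of~\eqref{LBShanCode} already follows from this identity together with the trivial estimate $s\big(\bigcup_x C_\theta(x)\big)\le|\bX|\,s_d(\theta)$; the task is to extract an additional factor of order $c_\theta d$, which requires exploiting the extent to which the caps $C_\theta(x_i)$ overlap when their centers are close.

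The key geometric input is the definition of $q(\theta)$: if $x_1,x_2\in S_{d-1}$ are at angular distance exactly $\theta$, then $C_\theta(x_1)\cap C_\theta(x_2)$ is contained in a spherical cap of angular radius $q(\theta)$ centered on the short arc between $x_1$ and $x_2$, and this inclusion tightens as the angular distance between $x_1,x_2$ grows past $\theta$. I would introduce the random subset $\bT\subseteq S_{d-1}$, defined locally from $\bX$, to encode the region where a point can be ``inserted'' by swapping out only code points that all lie in a common cap of radius close to $q(\theta)$. Two complementary lower bounds on $\alpha_d^\theta(\lam)$ in terms of $\E[s(\bT)]$ then emerge. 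The first bound, obtained by applying Mecke to a swap at a random location in $\bT$, grows with $\E[s(\bT)]$ (roughly like $\lam\E[s(\bT)]$ times a combinatorial factor). The second bound, obtained from a weighted covering argument using the geometric inclusion above, shrinks as $\E[s(\bT)]$ grows, because enlarging $\bT$ forces more wasted overlap to be paid for in the cover.

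Balancing the two bounds forces $\E[s(\bT)]$ to be on the scale of $s_d(q(\theta))$ and produces a logarithmic factor of order $\log(s_d(\theta)/s_d(q(\theta)))$; since $s_d(q(\theta))/s_d(\theta)\sim(\sin q(\theta)/\sin\theta)^d$ up to polynomial factors, this is precisely of order $c_\theta d$. Dividing by $s_d(\theta)$ yields the claimed bound. The hypothesis $\lam\ge 1/(d\cdot s_d(q(\theta)))$ is exactly the fugacity threshold at which the balancing value $\E[s(\bT)]\sim s_d(q(\theta))$ is actually attainable in expectation.

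The main obstacle I anticipate is pinning down the right $\bT$: it must be local enough that the Mecke identity applies cleanly, and simultaneously geometrically tailored so that the inclusion $C_\theta(x_1)\cap C_\theta(x_2)\subseteq C_{q(\theta)}(m)$ is exactly what makes both bounds go through with the correct scaling. Verifying the geometric inclusion itself is routine spherical trigonometry (reducing to the plane spanned by $x_1$ and $x_2$); the creative step is the simultaneous design of $\bT$ and the verification of the two complementary Mecke-type bounds.
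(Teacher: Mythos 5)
Your outline does land on the paper's general strategy---the grand canonical hard cap model, the free-volume identity $\alpha^\theta_A(\lam)=\lam\cdot\mathrm{F}^\theta_A(\lam)$ (your Mecke identity; Lemma~\ref{lemHardCap}\ref{cap2}), a local random set $\bT$, and two bounds in tension whose balance produces a factor $\log\bigl(s_d(\theta)/s_d(q(\theta))\bigr)=\Theta(c_\theta d)$---but as written it has two genuine gaps, and they are exactly where the work lies. First, you never define $\bT$, and the definition you gesture at (points insertable after ``swapping out'' code points lying in a common cap of radius close to $q(\theta)$) is not the one that makes the argument run. What works is to choose an independent uniform $\bv\in S_{d-1}$ and set $\bT=\{x\in C_\theta(\bv):\langle x,y\rangle\le\cos\theta\ \forall y\in\bX\cap C_\theta(\bv)^c\}$; the point of this choice is the spatial Markov property, which says that conditioned on the code outside $C_\theta(\bv)$, the code inside is exactly the hard cap model on $\bT$. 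That property is what converts the free-volume identity into $\alpha=\lam\,\E[1/Z^\theta_{\bT}(\lam)]\ge\lam e^{-\E\log Z^\theta_{\bT}(\lam)}$ (Jensen) and what gives the complementary identity $\alpha=\frac{1}{s_d(\theta)}\E[\alpha^\theta_{\bT}(\lam)]$. Note also that the correct balancing variable is $z=\E\log Z^\theta_{\bT}(\lam)$ rather than $\E[s(\bT)]$, the two being linked by $\log Z^\theta_A(\lam)\le\lam\, s(A)$ (Lemma~\ref{lemHardCap}\ref{cap5}); moreover the roles you assign to the two bounds are inverted relative to what these identities give (the insertion bound $\lam e^{-z}$ \emph{decreases} as $\bT$ grows, while the bound counting codewords inside $C_\theta(\bv)$, which is the one that uses the geometry of $q(\theta)$, \emph{increases}), which indicates the mechanism is not yet pinned down.

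Second, and more seriously, the geometric input you cite is insufficient. Knowing that $C_\theta(x_1)\cap C_\theta(x_2)$ lies in a cap of radius $q(\theta)$ when $x_1,x_2$ are at angular distance exactly $\theta$ (with the inclusion tightening beyond $\theta$) does not help where it is needed: in lower-bounding $\alpha^\theta_{\bT}(\lam)$ the relevant pairs of points both lie in $\bT\subseteq C_\theta(\bv)$, hence at angular distance \emph{at most} $\theta$, and for nearby points $C_\theta(u)\cap C_\theta(u')$ has measure close to $s_d(\theta)$, exponentially larger than $s_d(q(\theta))$---the inclusion goes the wrong way in exactly the relevant regime. What the proof needs is the averaged statement of Lemma~\ref{LemCapGeom}: for every measurable $A\subseteq C_\theta(x)$ and $\bu$ uniform in $A$, $\E[s(C_\theta(\bu)\cap A)]\le 2\, s_d(q(\theta))$. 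Its proof is not just spherical trigonometry: one symmetrizes over which of the two random points is closer to the cap center $x$ (the source of the factor $2$), replaces $A$ by $C_\tau(x)$ with $\tau=\arccos\langle x,\bu\rangle$, and runs a case analysis in $\tau\in[0,\theta]$ (the trivial bound $s_d(\tau)$ for $\tau\le\theta^*$, and containment in a cap of radius $\sigma(\theta,\tau)$ increasing to $\sigma(\theta,\theta)=q(\theta)$ for $\tau\in[\theta^*,\theta]$). Without this lemma your second bound cannot be established at the scale $\lam\,\E[s(\bT)]\,e^{-O(\lam s_d(q(\theta)))}$, and the balancing step---hence the factor $c_\theta d$---does not go through. (A smaller omission: after optimizing at $\lam=\frac{1}{d\, s_d(q(\theta))}$ you still need monotonicity of $\alpha_d^\theta$ in $\lam$, Lemma~\ref{lemHardCap}\ref{cap1}, to cover all larger $\lam$.)
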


Theorems~\ref{CorKissing} and~\ref{CorSpherical} follow immediately as $A(d,\theta) \ge \alpha_d^\theta (\lam)$.  The condition on $\lam$ in Theorem~\ref{thmCapAvg} has no consequence for Theorems~\ref{CorKissing}~and~\ref{CorSpherical}, but it is directly related to the bound we obtain in Theorem~\ref{thmCodeProbLB}. 

\section{A lower bound on $\alpha_d^\theta(\lam)$}

To prove Theorem~\ref{thmCapAvg}, we define the hard cap model on a measurable subset $A \subseteq S_{d-1}$ in the natural way: $\mathbf X_A$ is a Poisson process of intensity $\lam$ on $A$ conditioned on distinct points of $\mathbf X_A$ having minimal angle at least $\theta$.  We write $\mathbf X$ for $\mathbf X_{S_{d-1}}$.   The partition function $Z_A^\theta(\lam)$ is defined as $Z_{A}^\theta(\lam) = \sum_{k \ge 0} \lam^k \hat Z_A^\theta(k) $ where
\begin{align*}
 \hat Z_A^\theta(k) &=  \frac{1}{k!}  \int_{A^k}  \mathbf 1_{\mathcal D_\theta(x_1, \dots, x_k)} \, ds(x_1) \cdots ds(x_k)
\end{align*}
and the integration is again with respect to the normalized surface area on $S_{d-1}$.  

We let $\alpha_{A}^\theta(\lam)$ the expected size of the random spherical code on $A$, and note that
\begin{align}\label{eq:alphaA}
\alpha_{A}^\theta(\lam) &= \frac{\lam  \left( Z_A^\theta(\lam) \right) ^\prime}{Z_A^\theta(\lam)} \,.
\end{align}
Furthermore, let $\mathrm{F}_A^\theta(\lam)$ be the \textit{free area}: the expected normalized surface area of the set of points at angular distance greater than $\theta$ from $\mathbf X_A$;
	that is, the set of all points $y$ in $A$ so that $\new{\bX_A} \cup \{y \}$ is still a spherical code of angle $\theta$.
	Hence
\begin{align*}
\mathrm{F}_A^\theta(\lam) &= \E_\lam \left[s \left( \{ y \in A: \langle y, x \rangle \le \cos \theta \,\, \forall x \in \mathbf X_A  \} \right)   \right ]  \, .
\end{align*}

We define a two-part experiment as follows.  Sample a spherical code $\mathbf X_A$ of angle $\theta$ from the hard cap model on $A$ at fugacity $\lam$ and independently choose a unit vector $\mathbf v$ uniformly from~$A$. 
Define the random set 
\[ \mathbf T_A = \{ x \in C_{\theta}(\mathbf v)\cap A : \langle x , y \rangle \le \cos  \theta \,\, \forall  y\in \mathbf X_A\cap C_{\theta}(\bv)^c \} ; \] 
that is, $\mathbf T_A$ is the set of all points of $A$ in the spherical cap of angular radius $ \theta$ around $\mathbf v$ that are not blocked from being in the spherical code by a vector outside the cap of angular radius $ \theta$ around $\mathbf v$.  We write $\mathbf T$ for $\mathbf T_{S_{d-1}}$.

The following lemma collects key properties of $\alpha^\theta_A(\lam)$ for the proof of Theorem~\ref{thmCapAvg}.
\begin{lemma}
\label{lemHardCap}
Let $A \subseteq S_{d-1}$ be measurable and suppose $s(A) >0$. Then the following hold. 
\begin{enumerate}
[label={\rm (\roman*)}]
\item\label{cap1} $\alpha^\theta_A(\lam)$ is strictly increasing in $\lam$. 
\item\label{cap2} $\alpha^\theta_A(\lam) = \lam \cdot \mathrm{F}_A^\theta(\lam)$.
\item\label{cap3} $\alpha^\theta_A(\lam) = \lam \cdot s(A) \cdot   \E \left[ \frac{1}{Z^\theta_{\bT_A}(\lam)}  \right ]$. 
\item\label{cap4} $\alpha_{d}^\theta(\lam) = \frac{1}{s_d(\theta)}  \cdot \E \left [\alpha^\theta_{\mathbf T}(\lam) \right ]$.
\item\label{cap5} $\log Z^\theta_A(\lam)  \le \lam \cdot s(A)$. 
\item\label{cap6} $\alpha^\theta_A(\lam) \ge \lam  \cdot s(A) \cdot e^{- \lam \cdot \E  [s(\bT_A) ]}$.
\end{enumerate}
\end{lemma}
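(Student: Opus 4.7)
The strategy is to prove the six parts in order since later parts depend on earlier ones. Two tools recur throughout: the partition function identity $\alpha^\theta_A(\lam) = \lam(\log Z^\theta_A(\lam))'$ from~\eqref{eq:alphaA}, and the spatial Markov property of the hard cap model---conditioned on $\bX_A \cap B^c$ for a measurable $B \subseteq S_{d-1}$, the restriction $\bX_A \cap B$ is hard cap at fugacity $\lam$ on the subset of $B \cap A$ consisting of points not blocked by the exterior configuration. This property follows from the Poisson form of the underlying process together with the hard-core constraint.

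For \ref{cap1}, differentiating $\alpha^\theta_A = \lam (Z^\theta_A)'/Z^\theta_A$ once more yields $\frac{d\alpha^\theta_A}{d\lam} = \var_\lam(|\bX_A|)/\lam$; this is strictly positive when $s(A)>0$ and $\lam>0$ since $|\bX_A|$ takes the value $0$ and a positive value each with positive probability. For \ref{cap2}, unpacking the definition of $\mathrm{F}_A^\theta(\lam)$ gives
$\lam \mathrm{F}_A^\theta(\lam) = \frac{1}{Z_A^\theta(\lam)} \sum_{k \ge 0} \frac{\lam^{k+1}}{k!} \int_{A^{k+1}} \1_{\mathcal D_\theta(x_1,\dots,x_{k+1})}\, ds(x_1)\cdots ds(x_{k+1})$, and after reindexing $m = k+1$ this becomes $\sum_{m \ge 1} m \lam^m \hat Z^\theta_A(m)/Z^\theta_A(\lam) = \alpha^\theta_A(\lam)$.

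Parts \ref{cap3} and \ref{cap4} are the heart of the lemma; both apply the spatial Markov property to the random cap $C_\theta(\bv)$. For \ref{cap3}, a point $\bv$ chosen uniformly from $A$ independently of $\bX_A$ is free with respect to $\bX_A$ exactly when $\bX_A \cap C_\theta(\bv)$ is empty (up to a measure-zero event); integrating first over $\bv$ gives $\pr[\bv \text{ free}] = \mathrm{F}_A^\theta(\lam)/s(A)$. Conditioning instead on $\bv$ and on $\bX_A \cap C_\theta(\bv)^c$, the set $\bT_A$ is determined, and the Markov property identifies the conditional distribution of $\bX_A \cap C_\theta(\bv)$ as the hard cap model on $\bT_A$; the conditional probability of emptiness is therefore $1/Z^\theta_{\bT_A}(\lam)$. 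Averaging and combining with \ref{cap2} proves \ref{cap3}. For \ref{cap4}, specialize to $A = S_{d-1}$ and compute $\E[|\bX \cap C_\theta(\bv)|]$ in two ways: by Fubini, using $\pr_\bv[\bv \in C_\theta(x)] = s_d(\theta)$ for any fixed $x$, it equals $s_d(\theta)\alpha_d^\theta(\lam)$; by the Markov property conditional on $\bX \cap C_\theta(\bv)^c$ it equals $\E[\alpha^\theta_{\bT}(\lam)]$.

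Parts \ref{cap5} and \ref{cap6} follow quickly. Bounding $\1_{\mathcal D_\theta} \le 1$ in the definition of $\hat Z^\theta_A(k)$ gives $\hat Z^\theta_A(k) \le s(A)^k/k!$, so $Z^\theta_A(\lam) \le e^{\lam s(A)}$, which is \ref{cap5}. Applying \ref{cap5} to the random set $\bT_A$ yields $1/Z^\theta_{\bT_A}(\lam) \ge e^{-\lam s(\bT_A)}$; substituting into \ref{cap3} and using Jensen's inequality for the convex function $x \mapsto e^{-x}$ gives $\alpha^\theta_A(\lam) \ge \lam s(A) \E[e^{-\lam s(\bT_A)}] \ge \lam s(A) e^{-\lam \E[s(\bT_A)]}$. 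The main care point is the spatial Markov step used in \ref{cap3}--\ref{cap4}: the random set $\bT_A$ is determined jointly by $\bv$ and the exterior configuration, so the conditioning must be set up to respect both pieces of information, but once this is done the identification of the interior process as hard cap on $\bT_A$ is routine.
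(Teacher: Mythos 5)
Your proposal is correct and follows essentially the same route as the paper: the derivative/variance computation for (i), the series expansion identifying $\lam \mathrm{F}_A^\theta(\lam)$ with $\alpha_A^\theta(\lam)$ for (ii), the spatial Markov property applied to the cap $C_\theta(\mathbf v)$ for (iii) and (iv), the crude bound $Z_A^\theta(\lam)\le e^{\lam s(A)}$ for (v), and Jensen's inequality for (vi). The only differences are cosmetic (you expand the free area rather than the expected size in (ii), and you state the two-way counting in (iv) slightly more explicitly), so there is nothing to add.
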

\begin{proof}
To see~\ref{cap1}, we use~\eqref{eq:alphaA}. Hence
\begin{align}
\label{eqVar}
	\lam \cdot \alpha_A^\theta(\lam)' &=\lam\left(\frac{  \lam \left( Z_A^\theta(\lam) \right) ^\prime}{Z_A^\theta(\lam)}\right)' \\
	\nonumber
	&= \lam \cdot \left(   \frac{  Z_A^\theta(\lam)^{\prime} +  \lam Z_A^\theta(\lam)^{\prime \prime} }{Z_A^\theta(\lam)  } -  \frac{\lam\left(Z_A^\theta(\lam)^{\prime}  \right)^2   }{  Z_A^\theta(\lam)^2   } \right) \\
	\nonumber
	&= \E_\lam[ |\mathbf X_A| \new{]} + \new{\E_\lam} \left [ |\mathbf X_A|(|\mathbf X_A| -1) \right] - \left( \new{\E_\lam}[|\mathbf X_A|]  \right)^2 \\
	\nonumber
	&=  \E_\lam \left [ |\mathbf X_A|^2 \right] - \left( \new{\E_\lam}[|\mathbf X_A|]  \right)^2  \\
	\nonumber
	&= \var_\lam \left( |\mathbf  X_A| \right) > 0 \,.
\end{align}

To see \ref{cap2}, we calculate
\begin{align*}
	 \alpha_A^\theta(\lam)
	&=\sum_{k=0}^\infty (k+1)\mathbb{P}_\lam[|\bX_A|=k+1]\\
	&= \frac{1}{Z_A^\theta(\lam)}\sum_{k=0}^\infty  \int_{A^{k+1}}  \frac{\lam^{k+1}}{k!} \mathbf 1_{\mathcal D_\theta(x_0, \dots, x_{k})} \, ds(x_0) \cdots ds(x_{k})\\
	&= \frac{\lam}{Z_A^\theta(\lam)}\int_{A}
	\left(1+\sum_{k=1}^\infty  \int_{A^{k}}  \frac{\lam^{k}}{k!} \mathbf 1_{\mathcal D_\theta(x_0, \dots, x_{k})} \, ds(x_1) \cdots ds(x_{k})\right)ds(x_0)\\
	&=\lam \cdot \mathrm{F}_A^\theta(\lam).
\end{align*}

To see \ref{cap3}, we use \ref{cap2} and we compute
\begin{align*}
\alpha^\theta_A(\lam) &= \lam \cdot \mathrm{F}_A^\theta(\lam) \\
&= \lam \cdot \int_A  \pr [\max_{y \in \mathbf X} \langle v , y \rangle \le \cos  \theta]  \, ds(v)  \\
&= \lam \cdot s(A) \cdot  \E \left[\mathbf 1_{\bT_A \cap \mathbf X_A = \emptyset}     \right ]  \\
&=  \lam \cdot  s(A) \cdot \E \left[\frac{1}{\new{Z_{\bT_A}^\theta}(\lam)}     \right ]  \, .
\end{align*}
 
The last equality uses the spatial Markov property of the hard cap model: conditioned on $X\cap C_{\theta}(\bv)^c$, the distribution of $X\cap C_{\theta}(\bv)$ is exactly that of the hard cap model on the set $ \mathbf T_A$.

Next we prove \ref{cap4}.
Let $\mathbf v$ be a random chosen point on $S_{d-1}$.  Let $\mathbf X$ be a spherical code chosen independently from the hard cap model on $S_{d-1}$. Let $\mathbf T \subseteq C_{\theta}(\mathbf v)$ be the random set described above. Then
\begin{align*}
\alpha_d^{\theta}(\lam) &= \frac{1}{s_d(\theta)} \E \left [ \mathbf X \cap C_{\theta} (\mathbf v)    \right] 
= \frac{1}{s_d(\theta)} \E[ \alpha^\theta_{\mathbf T}(\lam)] \, ,
\end{align*}
where for the last equality  we again use the spatial Markov property.

For the proof of \ref{cap5}, we use the definition of $Z_A^\theta(\lam)$ and $\hat Z_A^\theta(k)$. Then 
\begin{align*}
Z_A^\theta(\lam) \le \sum_{k \ge 0 } \frac{1}{k!} s(A)^k \lam ^k = e^{\lam \cdot s(A)} \,.
\end{align*}
For \ref{cap6}, we use \ref{cap3}, \ref{cap5}, and Jensen's inequality and obtain
\begin{align*}
\alpha^\theta_A(\lam) 
\stackrel{\ref{cap3}}{=} \lam \cdot s(A) \cdot   \E \left[ \frac{1}{Z^\theta_{\bT_A}(\lam)}  \right ] 
\stackrel{\ref{cap5}}{\ge} \lam \cdot s(A) \cdot   \E \left[ e^{-\lam \cdot s(\mathbf T_A)}  \right] 
\ge \lam \cdot s(A) \cdot    e^{-\lam \cdot \E[ s(\mathbf T_A)]}   \, .
\end{align*}
\end{proof}

As the final ingredient for the proof of Theorem~\ref{thmCapAvg},
we need the following result about the intersection of an arbitrary set $A$ contained in a spherical cap
and a random cap whose center is chosen at random from~$A$.
\begin{lemma}
\label{LemCapGeom}
 Let $x \in S_{d-1}$ and $A \subseteq C_{ \theta}(x)$ be measurable with $s(A) >0$. Let $\mathbf u$ be a uniformly chosen point in $A$. Then
\begin{align}\label{eq:capEq1}
	\E[s(C_{  \theta}(\mathbf u ) \cap A) ] \le 2 \cdot s_d(q(\theta)) \,,
\end{align}
where $q(\theta)$ is defined in~\eqref{eqQdef}. In particular
\begin{align}\label{eq:capEq2}
	\alpha_A^\theta(\lam) \ge \lam\cdot s(A) \cdot e^{- \lam \cdot 2 \cdot  s_d(q(\theta))} \,.
\end{align}
\end{lemma}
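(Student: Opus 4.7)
The strategy is to prove \eqref{eq:capEq1} via a Fubini-type rewrite combined with the geometric defining property of $q(\theta)$, and then to deduce \eqref{eq:capEq2} as an immediate corollary of \eqref{eq:capEq1} and Lemma~\ref{lemHardCap}\ref{cap6}.

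First, I would rewrite the left-hand side of \eqref{eq:capEq1} using Fubini:
\begin{equation*}
	\E[s(C_\theta(\mathbf u) \cap A)] = \frac{1}{s(A)}\int_A s(C_\theta(u)\cap A)\,ds(u) = \frac{1}{s(A)}\int_A\int_A \mathbf 1[\langle u,v\rangle\ge\cos\theta]\,ds(u)\,ds(v).
\end{equation*}
This reduces \eqref{eq:capEq1} to the pair-counting bound
\begin{equation*}
	\int_A\int_A \mathbf 1[\langle u,v\rangle\ge\cos\theta]\,ds(u)\,ds(v) \;\le\; 2\cdot s_d(q(\theta))\cdot s(A).
\end{equation*}

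The key geometric input is the defining property of $q(\theta)$: whenever $\langle u,v\rangle=\cos\theta$, the lens $C_\theta(u)\cap C_\theta(v)$ is contained in the spherical cap of angular radius $q(\theta)$ centered at the great-circle midpoint of $u$ and $v$. Since $A\subseteq C_\theta(x)$, any pair $(u,v)\in A\times A$ with $\langle u,v\rangle\ge\cos\theta$ yields a spherical triangle $\{x,u,v\}$ with all three sides of length at most $\theta$, which tightly constrains the joint geometry. To exploit this, I would parameterize each admissible pair $(u,v)$ by its great-circle midpoint $m=m(u,v)$ together with the transverse separation direction, performing a Jacobian-style change of variables that recasts the double integral as an integral over $m$ of the area of a region of angular radius at most $q(\theta)$. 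The factor of $2$ in the bound arises naturally from the $2$-to-$1$ correspondence between ordered pairs and the underlying unordered midpoint data, or equivalently from symmetrizing the integrand along the two orderings $\langle u,x\rangle \lessgtr \langle v,x\rangle$.

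I expect the main obstacle to be establishing the geometric area bound uniformly in the pair $(u,v)$: for $\langle u,v\rangle>\cos\theta$ strictly, the lens $C_\theta(u)\cap C_\theta(v)$ is larger than the extremal one whose size controls $q(\theta)$, and for degenerate configurations in which $x$ is close to $u$ or $v$, the triangle $\{x,u,v\}$ collapses. These cases will require either a monotonicity reduction to the extremal equilateral configuration of side $\theta$ or a direct computation in spherical coordinates centered at $x$. Once \eqref{eq:capEq1} is established, \eqref{eq:capEq2} is immediate: by construction $\mathbf T_A\subseteq C_\theta(\mathbf v)\cap A$, so $\E[s(\mathbf T_A)]\le \E[s(C_\theta(\mathbf v)\cap A)] \le 2\,s_d(q(\theta))$ by \eqref{eq:capEq1}, and Lemma~\ref{lemHardCap}\ref{cap6} then yields $\alpha^\theta_A(\lam)\ge \lam\cdot s(A)\cdot e^{-\lam\,\E[s(\mathbf T_A)]}\ge \lam\cdot s(A)\cdot e^{-2\lam\cdot s_d(q(\theta))}$, which is exactly \eqref{eq:capEq2}.
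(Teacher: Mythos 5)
Your overall scaffolding matches the paper: the Fubini rewrite of $\E[s(C_\theta(\mathbf u)\cap A)]$ as a pair integral, the factor $2$ obtained by symmetrizing over the two orderings of $\langle x,u\rangle$ and $\langle x,v\rangle$, and the deduction of \eqref{eq:capEq2} from \eqref{eq:capEq1} together with Lemma~\ref{lemHardCap}\ref{cap6} (via $\bT_A\subseteq C_\theta(\mathbf v)\cap A$ with $\mathbf v$ uniform in $A$) are all correct and are exactly what the paper does. The genuine gap is the core geometric estimate, which you explicitly leave open. After the symmetrization, fix the point $u$ that is farther from $x$ and set $\tau=\arccos\langle x,u\rangle\in[0,\theta]$; the admissible $v$'s then lie in $C_\theta(u)\cap C_\tau(x)$, and what must be proved is the uniform bound $s\bigl(C_\theta(u)\cap C_\tau(x)\bigr)\le s_d(q(\theta))$ for \emph{every} $\tau\in[0,\theta]$. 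Your stated ``key input'' --- that the intersection of two $\theta$-caps whose centers are at angle exactly $\theta$ lies in a cap of radius $q(\theta)$ --- is just the definition of $q(\theta)$ and covers only the extremal case $\tau=\theta$; it says nothing about intermediate $\tau$, where the intersection is a larger set and it is not obvious that it still fits in a $q(\theta)$-cap (for $\tau$ near $0$ it does not, and one must instead use the trivial bound $s_d(\tau)$ and check this is still at most $s_d(q(\theta))$). The proposed midpoint/``Jacobian-style'' change of variables is not carried out: no parameterization, Jacobian, or $2$-to-$1$ correspondence on $S_{d-1}$ is specified, and as written it cannot substitute for the missing uniform bound, which you yourself flag as ``the main obstacle.''

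For comparison, the paper closes precisely this gap by an explicit computation: it introduces $\theta^*$ defined by $\sqrt{2}\sin(\theta^*/2)=\sin(\theta/2)$, uses the trivial bound $s(C_\theta(u)\cap C_\tau(x))\le s_d(\tau)$ for $\tau\in[0,\theta^*]$, and for $\tau\in[\theta^*,\theta]$ shows that $C_\theta(u)\cap C_\tau(x)$ is contained in a cap of angular radius $\sigma(\theta,\tau)$ given by \eqref{eq:sigma}, derived from the volume formula for the tetrahedron with apex at the centre $O$ of the sphere; monotonicity of $\sigma(\theta,\cdot)$ on $[\theta^*,\theta]$ together with $\sigma(\theta,\theta)=q(\theta)$ then yields the uniform bound, and the two regimes are reconciled because the maximum over $[\theta^*,\theta]$ dominates the bound on $[0,\theta^*]$. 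Some argument of this kind (your suggested ``monotonicity reduction to the extremal configuration,'' made precise) is indispensable; without it the proposal does not prove \eqref{eq:capEq1}, although your derivation of \eqref{eq:capEq2} from it is complete.
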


\begin{proof}
We have
\begin{align*}
\E[s(C_{ \theta}(\mathbf u ) \cap A) ] &= \frac{1}{s(A)} \int_A \int_A \mathbf 1_{ \langle v , u \rangle \ge \cos  \theta  } \, ds(v) \, ds(u) \\
&= \frac{2}{s(A)} \int_A \int_A \mathbf 1_{ \langle v , u \rangle \ge \cos \theta  }  \mathbf 1_{ \langle x, v \rangle \ge \langle x , u \rangle  }\, ds(v) \, ds(u)  \\
&\le 2 \max_{u\in C_{ \theta}(x)}  \int_A \mathbf 1_{\langle v , u \rangle \ge \cos \theta  }  \mathbf 1_{ \langle x, v \rangle \ge \langle x , u  \rangle  }\, ds(v)  \\
&\le 2 \max_{u\in C_{ \theta}(x)}  s\left( C_{ \theta}(u) \cap C_{ \arccos(\langle x, u \rangle ) }(x)   \right)  \, .
\end{align*}
\new{Now we fix $u\in C_\theta(x)$ and write $\tau=\arccos(\langle x, u \rangle )$.}
Next we want to show an upper bound for $s\left( C_{ \theta}(u) \cap C_{ \new{\tau}} (x)   \right)$.
Clearly 
\begin{align}
\label{eqest1}
s\left( C_{ \theta}(u) \cap C_{ \tau }(x)   \right)\leq s\left( C_{ \tau }(x)   \right) = s_d(\tau) \, .
\end{align}
Let $\theta^*$ be defined by $\sqrt{2}\sin(\theta^*/2)=\sin(\theta/2)$. 
(Hence $\theta^*>\theta/2$.)
We use the estimate~\eqref{eqest1} as long as $\tau\in [0,\theta^*]$,
which is equivalent to the angle $yxu$ being at least $\pi/2$ in Figure~\ref{fig:pyramid}.
For all larger $\tau$,
the set $C_{ \theta}(u) \cap C_{ \tau }(x)$ is contained in a cap of angular radius $\sigma=\sigma(\theta,\tau)$ as depicted in Figure~\ref{fig:pyramid}.
It is easy to see that $\sigma(\theta,\theta^*)=\theta^*$.

We claim that whenever $\tau\in [\theta/2,\theta]$ (and in particular $\tau \in [\theta^*, \theta]$) then
\begin{align}\label{eq:sigma}
\sigma (\theta, \tau) &= \arcsin \left( \frac{  \sqrt{1+ 2 \cos^2 \tau \cos \theta - 2 \cos^2 \tau - \cos^2  \theta     }  }{ \sin \tau  }   \right ) \,.
\end{align}

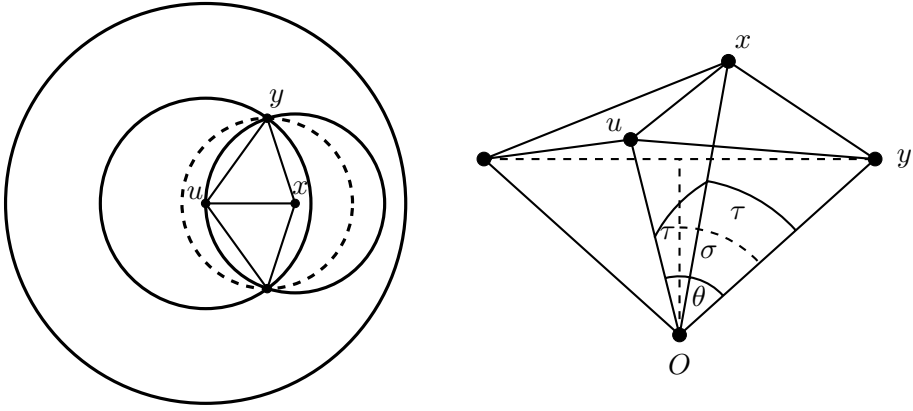
\begin{figure}[t]
\centering
\begin{tikzpicture}
\begin{scope}[scale=1.3]
\def\pot{0.07}

\node (a) at (-2,1.8) {};
\node (b) at (-0.5,2) {};
\node (c) at (0.5,2.8) {};
\node (d) at (2,1.8) {};
\node (o) at (0,0) {};

\draw[fill] (o) circle (\pot);
\draw[fill] (b) circle (\pot);
\draw[fill] (c) circle (\pot);
\draw[fill] (d) circle (\pot);
\draw[fill] (a) circle (\pot);

\draw[thick] (0,0)--(-2,1.8)--(-0.5,2)--(0,0)--(0.5,2.8)--(2,1.8)--(-0.5,2)--(0.5,2.8)--(-2,1.8)
(0,0)--(2,1.8)
(104:0.6) arc (104:42:0.6)
(79.5:1.6) arc (79.5:42:1.6)
(79.5:1.6) arc (120:153:1.4)
;

\draw[thick, dashed] 
(-2,1.8)--(2,1.8)
(0,0)--(0,1.8)
(0,1.1) arc (90:42:1.1)
;

\node  at (0.3,0.85) {$\sigma$};
\node  at (0.2,0.4) {$\theta$};
\node  at (0.6,1.25) {$\tau$};
\node  at (-0.12,1.05) {$\tau$};
\node  at (2.3,1.8) {$y$};
\node  at (0.65,3) {$x$};
\node  at (-0.67,2.15) {$u$};
\draw (0,-0.3) node {$O$};
\end{scope}

\begin{scope}[scale=0.7,shift={(-9,2.5)}]
\draw[fill] (0,0) circle (0.08);
\draw[fill] (1.7,0) circle (0.08);

\draw[very thick] (0,0) circle (3.8);
\draw[very thick] (0,0) circle (2);
\draw[very thick] (1.7,0) circle (1.7);

\draw[very thick, dashed] (1.17,0) circle (1.62);

\draw[fill] (1.17,1.62) circle (0.08);
\draw[fill] (1.17,-1.62) circle (0.08);

\draw[thick]
(0,0)--(1.17,1.62)--(1.7,0)--(1.17,-1.62)--(0,0)--(1.7,0);

\node  at (1.35,2) {$y$};
\node  at (1.8,0.3) {$x$};
\node  at (-0.2,0.2) {$u$};

\end{scope}
\end{tikzpicture}
\caption{The intersection of two caps is contained in another cap (shown dotted).
We denote by $O$ the centre of the unit sphere $S_{d-1}$.}
\label{fig:pyramid}
\end{figure}
Suppose we have a pyramid spanned by three unit vectors $u_1,u_2,u_3$ such that the angle between $u_i$ and $u_j$ is $\alpha_{ij}$.
Then the volume of the pyramid equals
$$V=\frac{1}{6}\sqrt{1+2\cos \alpha_{12}\cos \alpha_{13}\cos \alpha_{23}-\cos^2 \alpha_{12}-\cos^2 \alpha_{13}-\cos^2 \alpha_{23}}.$$
Hence the volume $V'$ of the pyramid in Figure~\ref{fig:pyramid} with apex $O$ and base $uxy$ equals
$$\frac{1}{6}\sqrt{1+2\cos^2 \tau\cos \theta-2\cos^2 \tau-\cos^2 \theta}.$$
The area of the triangle $Oux$ equals $\frac{1}{2}\sin \tau$.
Therefore,
\begin{align*}
	\frac{1}{3} \cdot \sin \sigma \cdot \frac{1}{2}\sin \tau = V'
\end{align*}
which yields~\eqref{eq:sigma}.

The function $\sigma(\theta, \tau)$ is increasing on the interval $[\theta^*,  \theta]$ and so the maximum is achieved at $\tau =  \theta$, and this maximum is larger than the maximum of the bound~\eqref{eqest1} on $[0, \theta^*]$.  Since $\sigma(\theta,\theta) =q(\theta)$, we have (for some arbitrary $x_\theta\in S_{d-1}$ with $\langle x,x_\theta\rangle=\cos \theta$)
\begin{align*}
\E[s(C_{  \theta}(\mathbf u ) \cap A) ] &\le  2  s\left( C_{ \theta}(x_{ \theta}) \cap C_{  \theta }(x)   \right)  \\
&\le 2s_d(q(\theta)) \,.
\end{align*}
Combining this with \ref{cap6} gives~\eqref{eq:capEq2}. 

\end{proof}

With the lemmas above, we prove Theorem~\ref{thmCapAvg}.  
\begin{proof}[Proof of Theorem~\ref{thmCapAvg}]
Fix $\theta \in (0, \pi/2)$ and let $\alpha = \alpha_{d}^\theta(\lam)$.   
By Jensen's inequality
\begin{align}
\label{eqPfBnd1}
\alpha &\stackrel{\ref{cap3}}{=} \lam \cdot s(S_{d-1}) \cdot \E \left[ \frac{1}{Z^\theta_{\mathbf T}(\lam)}  \right ] =\lam \cdot  \E \left[ \frac{1}{Z^\theta_{\mathbf T}(\lam)}  \right ]\ge \lam \cdot e^{-   \E [\log Z^\theta_{\mathbf T}(\lam)]} \,,
\end{align}
where the expectation is with respect to the two-part experiment forming the random set $\bT$. In words, if the random set $\mathbf T$ is small typically, where small is measured in terms of $\log Z^\theta_{\mathbf T}(\lam)$, then $\alpha$ must be large since $\mathbf v$ will be free to be part of the spherical code with good probability. 

We can also write
\begin{eqnarray*}
\alpha 
&\stackrel{\ref{cap4}}{=}& \frac{1}{s_d( \theta)}  \cdot \E \left[ \alpha^\theta_{ \bT} (\lam) \right ] \\
&\stackrel{(\ref{eq:capEq2})}{\ge}& \frac{1}{s_d( \theta)} \cdot \E \left[\lam \cdot s(\bT) \cdot e^{- \lam \cdot 2 \cdot s_d(q(\theta))}  \right ]  \\
&\stackrel{\ref{cap5}}{\ge} & 
 \frac{1}{s_d( \theta)}\cdot e^{- \lam \cdot 2 \cdot s_d(q(\theta))}  \E [ \log Z^\theta_{\bT}(\lam)] \, .
\end{eqnarray*}
In words, if $\mathbf T$ is typically large (measured again   in terms of $\log Z^\theta_{\mathbf T}(\lam)$), then $\alpha$ must be large, as the cap $C_{\theta}(\mathbf v)$ will contain many codewords in expectation.   

Now we can play these bounds against each other.  Let $z =  \E[  \log Z^\theta_{\mathbf T}(\lam)]$. Applying both bounds gives
\begin{align}
\label{eqAlpBound1}
\alpha \ge \inf_z \max \left \{ \lam e^{- z}, z \cdot  \frac{1}{s_d( \theta)} e^{ -\lam \cdot 2 \cdot s_d(q(\theta))}   \right \} .
\end{align}
As the first expression is decreasing in $z$ and the second increasing, the infimum occurs when they are equal.  In other words, $\alpha \ge \lam e^{- z^*} $, where $z^*$ is the solution to 
\begin{align*}
\lam e^{-z} &=  z \cdot   \frac{1}{s_d( \theta)} e^{ -\lam \cdot 2 \cdot s_d(q(\theta))} \,.
\end{align*}
We can write
\begin{align*}
z^* &=  W \left( \lam s_d( \theta) e^{ \lam \cdot 2 \cdot s_d(q(\theta))}  \right )
\end{align*}
where $W(\cdot)$ is the Lambert-W function, the inverse function of $f(x) = x e^x$.   One property of $W(\cdot)$ we will use is that $W(x) = \log x - \log \log x + o(1)$ as $x \to \infty$.

Although $\alpha_d^{\theta}(\lam)$ is monotonically increasing in $\lam$, the bound~\eqref{eqAlpBound1} is not. 
\new{To obtain the desired bound we must take $\lam$ asymptotically smaller than $\frac{1}{s_d(q(\theta))}$, but of the same order on a logarithmic scale. Taking $\lam = \frac{1}{d \cdot s_d(q(\theta))}$ suffices. } 

This gives, \new{recalling the asymptotics of $s_d(\theta)$ from~\eqref{LBShanCode}, }
 \begin{align*}
z^*&= W( \lam \cdot s_d(\theta) \cdot e^{2/ d})\\
&=W( \lam \cdot s_d(\theta))+o(1) \\
&=\log \lam + \log s_d(\theta) - \log\log \frac{s_d(\theta)}{ds_d(q(\theta))}+o(1)\\
 &=\log \lam + \log s_d(\theta)  - \log d - \log  c_\theta+ o(1)  \, ,
 \end{align*}
 where $c_\theta=   \log \left(\frac{\sin \theta}{\sin q(\theta)  }  \right)$,
and so
\begin{align*}
\alpha&\ge \lam e^{-z^*} = (1+o(1)) \frac{c_\theta \cdot d  }{ s_d( \theta)} 
\end{align*}
as desired.
\end{proof}

\subsection{A lower bound on the volume of spherical codes of a given size}
\label{secCodesVolLB}

\begin{proof}[Proof of Theorem~\ref{thmCodeProbLB}]
Let $\lam =  \frac{1}{s_d(q(\theta))}$  and    let $\alpha ' = (1-1/d) \alpha_d^{\theta}(\lam)$.
We know from Theorem~\ref{thmCapAvg} that 
\begin{align*}
\alpha' &= (1+o(1)) \cdot \alpha_d^{\theta}(\lam) \ge (1+o(1))\frac{ c_\theta \cdot d } {s_d(\theta)  } \,.
\end{align*}
 Then since the maximum size of a spherical code is at most $2^{d}$ for large enough $d$ (using, say, Rankin's bound), we have that 
\begin{align*}
\pr_\lam[|\mathbf X| \ge \alpha' ] &\ge \frac{\alpha_d^{\theta}(\lam)   }{d\cdot 2^{d}  }  \,.
\end{align*}
Let $n$ be the most likely value for the size of $\mathbf X$ that is at least as large as $\alpha'$.  Then we have 
\begin{align*}
\pr_\lam[|\mathbf X| = n ]  &\ge  \frac{\alpha_d^{\theta}(\lam)   }{d\cdot 2^{d} \cdot 2^{d}  } \ge 4^{- d} \,. 
\end{align*}

Now we bound the probability that $n$ unit vectors form a spherical code of angle $\theta$ from below:
\begin{align*}
\pr [E_{d,\theta, n}]/n! = \hat Z_d^\theta(n) &= \pr_\lam[ |\mathbf X| = n] \cdot Z_d^\theta(\lam) \cdot \lam^{-n}   \\
&\ge 4^{-d} \cdot Z_d^\theta(\lam) \cdot \lam^{-n} \\
&\ge 4^{-d} \cdot \lam^{-n}  \\
&= 4^{-d} \cdot   s_d(q(\theta)) ^n \, ,
\end{align*}
and so
\begin{align*}
\frac{1}{n} \log (\pr [E_{d,\theta, n}]/n!) &\ge   \log  s_d(q(\theta))   +o(1) \,.
\end{align*}
\new{Note that we could also have taken $\lam = \frac{1}{ds_d(q(\theta))}$ but this gives no improvement to the leading order term of the final bound. }
\end{proof}
To interpret this lower bound, we compare it to a naive lower bound on $\pr [E_{d,\theta, n}]$, known as the `cell model' lower bound in statistical \new{physics}.  For $\theta' > \theta$, suppose we know there exists a spherical code $x_1, \dots, \new{x_n}$ of angle $\theta'$ of size  $n \ge (1+o(1)) \frac{c_{\theta'} \cdot d}{s_d(\theta')}$.  Now consider the lower bound on $\pr [E_{d,\theta, n}]$ given as follows.  Form a `cell' of angular radius $(\theta'-\theta)/2$ around each $x_i$.  Choose points $y_1, \dots, y_n$ independently and uniformly at random and let $\tilde E$ be the event that each $y_i$ falls into a distinct cell around one of the $x_j$'s.  As the angular radius between points in distinct cells is at least $\theta$, we have $\pr[E_{d,\theta,n}] \ge \pr[\tilde E]$.  If we want $n$ to be within a constant factor of the bound from Theorem~\ref{CorSpherical}, we can only take $\theta'-\theta$ as large as $O(1/d)$, as the lower bound decreases exponentially in $d$ as the angle increases.  With $\theta'= \theta + c/d$, we have 
\begin{align*}
\pr[\tilde E] &= n! \cdot s_d\left( \frac{c}{2d} \right)^n  
\end{align*}
and so
\begin{align*}
\frac{1}{n} \log  (\pr[\tilde E]/n!) &= \log   s_d(c/2d)   \,,
\end{align*}
and since $s_d(c/2d)$ is smaller than $s_d(q(\theta))$ by a factor $d^{\Theta(d)}$, Theorem~\ref{thmCodeProbLB} gives a significant improvement to the naive lower bound.

\providecommand{\bysame}{\leavevmode\hbox to3em{\hrulefill}\thinspace}
\providecommand{\MR}{\relax\ifhmode\unskip\space\fi MR }
\providecommand{\MRhref}[2]{%
  \href{http://www.ams.org/mathscinet-getitem?mr=#1}{#2}
}
\providecommand{\href}[2]{#2}


\begin{thebibliography}{10}

\bibitem{ball1992lower}
K.~Ball, \emph{A lower bound for the optimal density of lattice packings}, Int.
  Math. Res. Not. \textbf{1992} (1992), no.~10, 217--221.

\bibitem{boroczky2004finite}
K.~B{\"o}r{\"o}czky, \emph{Finite packing and covering}, vol. 154, Cambridge
  University Press, 2004.
	
\bibitem{BDM12}
P.~Boyvalenkov, S.~Dodunekov, and O.~Musin, \emph{A survey on the kissing
  numbers}, Serdica Math. J. \textbf{38} (2012), 507--522.

\bibitem{chabauty1953resultats}
C.~Chabauty, \emph{R\'esultats sur l'empilement de calottes \'egales sur une
  p\'erisph\`ere de {$R^n$} et correction \`a un travail ant\'erieur}, C. R.
  Acad. Sci. Paris \textbf{236} (1953), 1462--1464.

\bibitem{cohn2016packing}
H.~Cohn, \emph{Packing, coding, and ground states}, arXiv:1603.05202 (2016).

\bibitem{cohn2014sphere}
H.~Cohn and Y.~Zhao, \emph{Sphere packing bounds via spherical codes}, Duke
  Math. J. \textbf{163} (2014), 1965--2002.

\bibitem{conway2013sphere}
J.~H. Conway and N.~J.~A. Sloane, \emph{Sphere packings, lattices and groups},
  third ed., vol. 290, Springer, 1999.

\bibitem{coxeter1959covering}
H.~S.~M. Coxeter, L.~Few, and C.~A. Rogers, \emph{Covering space with equal
  spheres}, Mathematika \textbf{6} (1959), 147--157.

\bibitem{davenport1947hlawka}
H.~Davenport and C.~A. Rogers, \emph{Hlawka's theorem in the geometry of
  numbers}, Duke Math. J. \textbf{14} (1947), 367--375.

\bibitem{davies2018average}
E.~Davies, M.~Jenssen, W.~Perkins, and B.~Roberts, \emph{On the average size of
  independent sets in triangle-free graphs}, Proc. Amer. Math. Soc.
  \textbf{146} (2018), 111--124.

\bibitem{delsarte1972bounds}
P.~Delsarte, \emph{Bounds for unrestricted codes, by linear programming},
  Philips Res. Rep. \textbf{27} (1972), 272--289.

\bibitem{gilbert1952comparison}
E.~N. Gilbert, \emph{A comparison of signalling alphabets}, Bell System Tech.
  J. \textbf{31} (1952), 504--522.

\bibitem{Fjoos17}
M.~Jenssen, F.~Joos, and W.~Perkins, \emph{On the hard sphere model and sphere
  packings in high dimensions}, arXiv:1707.00476 (2017).

\bibitem{jiang2004asymptotic}
T.~Jiang and A.~Vardy, \emph{Asymptotic improvement of the
  {G}ilbert-{V}arshamov bound on the size of binary codes}, IEEE Trans. Inform.
  Theory \textbf{50} (2004), 1655--1664.

\bibitem{kabatiansky1978bounds}
G.~A. {Kabatjanski\u\i} and V.~I. {Leven\v ste\u\i n}, \emph{Bounds for
  packings on the sphere and in space}, Problemy Pereda\v ci Informacii
  \textbf{14} (1978), 3--25.

\bibitem{krivelevich2004lower}
M.~Krivelevich, S.~Litsyn, and A.~Vardy, \emph{A lower bound on the density of
  sphere packings via graph theory}, Int. Math. Res. Not. \textbf{2004} (2004),
  2271--2279.

\bibitem{levenstein1979bounds}
V.~I. {Leven\v ste\u\i n}, \emph{On bounds for packings in n-dimensional
  {E}uclidean space}, Soviet Math. Dokl. \textbf{20} (1979), 417--421.

\bibitem{Min05}
H.~Minkowski, \emph{Diskontinuit\"atsbereich f\"ur arithmetische
  {\"A}quivalenz}, J. Reine Angew. Math. \textbf{129} (1905), 220--274.

\bibitem{musin2008kissing}
O.~R. Musin, \emph{The kissing number in four dimensions}, Ann. of Math.
  \textbf{168} (2008), 1--32.

\bibitem{odlyzko1979new}
A.~M. Odlyzko and N.~J.~A. Sloane, \emph{New bounds on the number of unit
  spheres that can touch a unit sphere in {$n$}\ dimensions}, J. Combin. Theory
  Ser. A \textbf{26} (1979), 210--214.

\bibitem{pfender2004kissing}
F.~Pfender and G.~M. Ziegler, \emph{Kissing numbers, sphere packings, and some
  unexpected proofs}, Notices Amer. Math. Soc. \textbf{51} (2004), 873--883.

\bibitem{rankin1955closest}
R.~A. Rankin, \emph{The closest packing of spherical caps in {$n$} dimensions},
  Proc. Glasgow Math. Assoc. \textbf{2} (1955), 139--144.

\bibitem{rogers1947existence}
C.~A. Rogers, \emph{Existence theorems in the geometry of numbers}, Ann. of
  Math. \textbf{48} (1947), 994--1002.

\bibitem{schutte1952problem}
K.~Sch{\"u}tte and B.~L. van~der Waerden, \emph{Das {P}roblem der dreizehn
  {K}ugeln}, Math. Ann. \textbf{125} (1952), 325--334.

\bibitem{shannon1959probability}
C.~E. Shannon, \emph{Probability of error for optimal codes in a Gaussian
  channel}, Bell System Tech. J. \textbf{38} (1959), 611--656.

\bibitem{vance2011improved}
S.~Vance, \emph{Improved sphere packing lower bounds from {H}urwitz lattices},
  Adv. Math. \textbf{227} (2011), 2144--2156.

\bibitem{varshamov1957estimate}
R.~R. Varshamov, \emph{Estimate of the number of signals in error correcting
  codes}, Dokl. Akad. Nauk SSSR \textbf{117} (1957), 739--741.

\bibitem{venkatesh2012note}
A.~Venkatesh, \emph{A note on sphere packings in high dimension}, Int. Math.
  Res. Not. \textbf{2013} (2013), 1628--1642.

\bibitem{2018lattice}
S.~{Vl{\u a}du{\c t}}, \emph{{Lattices with exponentially large kissing
  numbers}}, arXiv:1802.00886 (2018).

\bibitem{wyner1965capabilities}
A.~D. Wyner, \emph{Capabilities of bounded discrepancy decoding}, Bell Systems
  Tech. J. \textbf{44} (1965), 1061--1122.

\bibitem{zong1998kissing}
C.~Zong, \emph{The kissing numbers of convex bodies---a brief survey}, Bull.
  London Math. Soc. \textbf{30} (1998), 1--10.

\end{thebibliography}
\end{document}